\newtheorem{theorem}{Théorème}
\newtheorem{definition}[theorem]{Definition}
\newtheorem{example}[theorem]{Exemple}
\newtheorem{lemma}[theorem]{Lemma}
\newtheorem{remark}[theorem]{Remark}
\def\rj{ R_{-j}}
\def\Zset{\mathbb{Z}}
\def\Nset{\mathbb{N}}
\def\E{\mathbb{E}}
\def\P{\mathbb{P}}
\def\C{\mathcal{C}}
\begin{document}

\title{Clustering indices and decay of correlations in non-Markovian models}

\author[M. Abadi]{Miguel Abadi}
\address{Miguel Abadi\\ Instituto de Matem\'atica e Estat\'istica\\Universidade de S. Paulo\\ Rua do Mat\~ao 1010\\ Cid. Universitaria\\ 05508090 - São Paulo\\ SP - Brasil} \email{leugim@ime.usp.br}
\urladdr{\url{http://miguelabadi.wixsite.com/miguel-abadi}}

\author[A. C. M. Freitas]{Ana Cristina Moreira Freitas}
\address{Ana Cristina Moreira Freitas\\ Centro de Matem\'{a}tica \&
Faculdade de Economia da Universidade do Porto\\ Rua Dr. Roberto Frias \\
4200-464 Porto\\ Portugal} \email{\href{mailto:amoreira@fep.up.pt}{amoreira@fep.up.pt}}
\urladdr{\url{http://www.fep.up.pt/docentes/amoreira/}}

\author[J. M. Freitas]{Jorge Milhazes Freitas}
\address{Jorge Milhazes Freitas\\ Centro de Matem\'{a}tica \& Faculdade de Ci\^encias da Universidade do Porto\\ Rua do
Campo Alegre 687\\ 4169-007 Porto\\ Portugal}
\email{\href{mailto:jmfreita@fc.up.pt}{jmfreita@fc.up.pt}}
\urladdr{\url{http://www.fc.up.pt/pessoas/jmfreita/}}

\thanks{All authors were partially supported by the joint project FAPESP (SP-Brazil) and FCT (Portugal) with reference FAPESP/19805/2014. ACMF and JMF were partially supported by FCT projects  PTDC/MAT-CAL/3884/2014 and PTDC/MAT-PUR/28177/2017, with national funds, and by CMUP (UID/MAT/00144/2013), which is funded by FCT with national (MCTES) and European structural funds through the programs FEDER, under the partnership agreement PT2020.}

\date{\today}

\keywords{Extremal Index, clustering, cluster size distribution, sojourn time.} \subjclass[2010]{60G70, 37A50, 37B20, 37A25}


\begin{abstract}
When there is no independence, abnormal observations may have a tendency to appear in clusters instead of scattered along the time frame. Identifying clusters and estimating their size are important problems arising in statistics of extremes or in the study of quantitative recurrence for dynamical systems. In the classical literature, the Extremal Index appears associated to the cluster size and, in fact, it is usually interpreted as the reciprocal of the mean cluster size. This quantity involves a passage to the limit and in some special cases this interpretation fails due to an escape of mass when computing the limiting point processes. Smith \cite{S88} introduced a regenerative process exhibiting such disagreement. Very recently, in \cite{AFF18} the authors used a dynamical mechanism to emulate the same inadequacy of the usual interpretation of the Extremal Index. Here, we consider a general regenerative  process that includes Smith's model and show that it is important to consider finite time quantities instead of asymptotic ones and compare their different behaviours in relation to the cluster size. We consider other indicators such as what we call the sojourn time, which corresponds to the size of groups of abnormal observations, when there is some uncertainty regarding where the cluster containing that group was actually initiated. We also study the decay of correlations of the non-Markovian models considered.
\end{abstract}

\maketitle

\section{Introduction}

In Extreme Value Theory, the convergence of the maxima of a sequence of i.i.d. random variables is a very well studied subject.
The book \cite{LLR83} is  a major reference on the subject.
The starting point is  that, for $(X_n)_{n \geq \Nset}$ a sequence of i.i.d. random variables over a probability space 
$(\Omega, \mathcal{F}, \mathbb{P})$ with cumulative distribution function $F$,
it is straightforward to see that,
for any real positive $\tau$ and a real sequence $(u_n)_{n \geq 0}$ one has
$$
\underset{n \rightarrow \infty}{\lim} \mathbb{P}(\max\{X_1,...,X_n\} \leq u_n)=e^{-\tau} 
\quad {\rm \ if \ and \ only \ if \ } \quad 
\underset{n \rightarrow \infty}{\lim}  n(1-F(u_n))=\tau .
$$
And the classical three possible limits theorem for the maximum follows.

The independent case is far from modelling the real world, and a major effort to extend this result to dependent
processes has been faced in the last decades.
The principal ingredient is the appearance of the extremal index $\theta$ verifying
$$
\underset{n \rightarrow \infty}{\lim} \mathbb{P}(\max\{X_1,...,X_n\} \leq u_n)=e^{-\theta\tau} 
\ \ {\rm \ whenever \ } \ \ 
\underset{n \rightarrow \infty}{\lim}  n(1-F(u_n))=\tau .
$$
This new factor describes the capacity of a given maximum to produce subsequent ones, due to the correlation of the r.v.'s, ingredient that is absent in the i.i.d. case.
It follows by the above property that the extremal index $\theta \in \left[0 , 1\right]$
and that it is strictly smaller than one for observables that tend to appear in clusters
rather than isolated, as in the i.i.d. case, where it is equal to one.
However, this way of introducing it as a limiting value through the above properties gives rise to certain difficulties to calculate
or even estimate it.

It is the purpose of this paper to show the relevance of considering the extremal index not just as an asymptotic limit
but rather as quantity at finite time, as well as, to show the different behaviours that both cases may present.

It appeared as natural to associate the extremal index with the reciprocal mean of the distribution  of the size of the cluster
of excedentes generated by the correlation of the r.v.'s.. 
The reason is heuristically clear.
Suppose that one wants to observe a cluster of size, at least $k$, of exceedances of the level $u_n$.
That is  
\[\mathbb{P}(N \ge k ) = \mathbb{P}(  \cap_{i=1}^{k} \{ X_i > u_n \}  ) . \]
Here $N$ stands for the number of consecutive observations of the exceedance.
The universal formula
\[
 \mathbb{P}( \cap_{i=1}^{k} A_i  ) = \prod_{i=1}^{k} \mathbb{P}(  A_i | \cap_{j=1}^{i-1} A_j  )
\]
says that, if dependance with respect to the remote past is small and only close past matters,
the factors on the right hand side in the above equality should be all about the same.
If the meaning of "close past" is quantified by looking back up to a distance  $q$,
then  the last display  suggests that 
\begin{equation} \label{heu}
\mathbb{P}(N \ge k ) \approx  \mathbb{P}( \{ X_{q+1} > u_n \}    |  \cap_{i=1}^{q} \{ X_i > u_n \}  )^k , 
\end{equation}
and $N$ has a limiting geometric distribution with success probability 
\begin{equation} \label{entry}
 \mathbb{P}(   X_{q+1} \le u_n |   \cap_{i=1}^{q} \{ X_i > u_n \}  ) , 
\end{equation}
which concludes the intuition.
This heuristic argument was proved to hold under suitable conditions in \cite{A06}.
On the other hand, Aytaç et al. (\cite{AFV15}) constructed several examples where both (limiting) parameters conincide even when
the cluster size distribution has nothing to do with a geometric one.
R.L. Smith proposed an example where this two quantities have different limits \cite{S88}.

In the present paper we have two main purposes. 
Firstly, we want to show that one should consider not only asymptotic limits but also look at the behaviour for finite $n$ in order
to get a full picture of the situation.
Not only because in the real world we only observe finite $n$, but also  because things may behave differently
at finite size and in the limit.
For instance, we will show that, even if, in the limit, the extremal index and the reciprocal of the cluster size are different as in Smith's example, they coincide for finite time.
Also, we show that the above  heuristic argument  may not work   and a subtle different quantity could be more appropriate to consider.
In the above argument, $N$ was considered assuming the \emph{existence} of a cluster, \emph{i.e}, assuming that we started with an exceedance without specifically guaranteeing if that exceedance initiated a cluster, which means that it could correspond to an exceedance inside a cluster initiated in the past.
But we can consider the case where that exceedance is actually \emph{beginning} the cluster.
This would make no difference if the far past is irrelevant. In our models, the extremal index will correspond to the second case and will be different to the first one.

Further, to emphasize the importance of looking at finite and not just limiting statistics, we present another
model where the extremal index does not exist since its asymptotics fluctuate. The same happens with the distribution of the cluster size. However, both can still be identified as the reciprocal of each other for finite observations.

In our case study we also consider the following application.

\vskip0.3cm
{\bf Application: Hitting times.} Parallel to the extreme value theory and totally independently, it was 
deeply studied the theory of hitting times in Poincaré Recurrence Theory.
The review papers \cite{AG04, C00, H13} bring a major panorama of classical results.
Hitting times to balls and cylinder sets were specifically considered.
To fix ideas, consider a sequence $a_0^{n-1}$ and define  the \emph{hitting time}  
$$
\tau_{n}=\inf \left\{  t \geq 1 \ | \  X_t^{t+n-1}=a_0^{n-1} \right\}  .
$$
Now, if an infinite sequence  ${\bf a}=(a_0^{\infty} )$ is fixed, then
one can consider the number of (consecutive) letters of {\bf a} that can be read in the process at any time $t$.
Namely
\[
Y_t = \max \{  k  \ge 0 \ | \ X_t^{t+k-1}=a_0^{k-1} \} .
\]
The usual abuse of notation $X_t^{t-1}=a_0^{-1}$ means there is no coincidence.
Thus,  one gets
\[
\{  \tau_n > t \} =\left\{ \max_{1\le j \le t}Y_j < n\right\} .
\]
That is, the hitting time problem translates to a maximum problem.
It is well known   
that, under suitable mixing conditions,  the hitting time converges to an exponential law.
The most general result to date \cite{AS11} says that for $\alpha$-mixing systems and every ${\bf a}$
\[
\lim_{n\to\infty} \P\left(  \tau_n > \frac{t}{\theta_q   \mu(a_0^{n-1})}\right) = e^{-t} ,
\]
for some $q=q({\bf a}, \alpha)$ which in general is as large as the memory of the process.
Therefore, the problem is how to compute $\theta_q$ for $q$ large (which also may include to determine the appropriat $q$).
Under certain conditions ($\phi$-mixing) \cite {A06, ACG15} it was shown that $q$ can be replaced by the periodicity of the observed set, which in general is short and makes $\theta$ easier to handle. 
In our case, these mixing  conditions are not verified if the alphabet is infinite.  However we show that actually the periodicity of the observed set can still be used to calculate $\theta$. 

The structure of the paper is the following. In Section 2, we introduce the general form of the regenerative processes we consider and basic properties are derived.
Section 3 is dedicated to the decay of correlations of the model.
In Section 4 we compute the parameters for the different cases we consider.
The first one exhibits different values for the finite and limiting extremal index.
The second one exhibits the geometric distribution where finite and infinite case coincide.
The third one shows a case where the limit of the cluster size is actually a sub-distribution and the limiting extremal index does not exist.
Finally, the fourth case shows a cluster size distribution which fluctuates cyclically and thus the limiting extremal index
also does not exist. In all of them the finite extremal index coincides with the inverse of the finite mean cluster size.
Only in the geometric case the cluster size equals to the sojourn size, which we introduce in the beginning of Section~\ref{sec:EI-and-company}.

 \section{The Model} \label{model}

We consider a general construction of regenerative processes of which the Smith model \cite{S88} is a particular case.
They are discrete time models over a finite or countable alphabet. To simplify, from now on we consider that the alphabet is the set of  positive integers $\Nset$.

Let first $(Z_n)_{n \in \mathbb{Z}}$ be an i.i.d sequence of random variables taking positive integer values, with 
common distribution $p_a=\mathbb{P}(Z_n=a), a \in \Nset$ and finite mean $\mathbb{E}(Z_n) < \infty$.
To each $a\in \Nset$ we also associate a distribution  $q_a= (q_a(k))_{k\in \Nset}$.
The process $(X_n)_{n \in \mathbb{Z}}$ that we are going to consider can be described, informally,
in the following way: take $Z_n$, choose a random number $\xi_n$ with distribution $q_{Z_n}$ independent of everything, and repeate
the symbol $Z_n$ a number $\xi_n$ of times.
The blocks of size $\xi_n$ (filled-up with the symbol $Z_n$) are concatenated to  create the process $(X_n)_{n\in\Zset}$.
A suitable initial condition  turns it into a stationary process if we assume that the mean regeneration time
is finite.
To formalize, define the sequence $(X_n)_{n \in \mathbb{Z}}$ as follows.
Let the auxiliary random variable $\zeta$ have distribution 
\begin{equation} \label{invar}
\P(\zeta=a)= \frac{\sum_{k\ge 1} k q_a(k) p_a}{\nu} .
\end{equation} 
It will be used only as a random shift to make the process stationary. 
To that end, for every $n \ge 1$ and each index  $i$ such that 
$$
\zeta + \underset{j=1}{\overset{n-1}{\sum}}\xi_j  \leq \ i   \ <      \zeta+  \underset{j=1}{\overset{n}{\sum}} \xi_j=: \zeta_n,
$$
set $X_i=Z_n$.
(By convention the sum over an empty set of indexes equals zero.)
This defines $X_i$ for all $i \ge \zeta$.
 
Secondly we define the process for the remaining indexes in a similar way. 
That is, for every $n \ge 0$ and each index  $i$ such that 
$$
\zeta_{-n}:=\zeta - \underset{j=0}{\overset{n}{\sum}}\xi_{-j}  \leq \ i   \ <      \zeta -  \underset{j=0}{\overset{n-1}{\sum}} \xi_{-j} ,
$$
set  $X_i=Z_{-n}$.

The times $(\zeta_n)_{n\in\mathbb{Z}}$ which determine a new choice for a symbol $a$  form a regenerative process. 
The process is positive recurrent with stationary measure $\mu$ if and only if the regeneration time has finite mean.
In our case, this mean is
\begin{equation} \label{mean}
\nu:= \E(\xi_1)=  \E(\E(\xi_1|Z_1))= \sum_{a\in\Nset} p_a \E(q_a)  \ ,
\end{equation}
which we assume to be finite.
By Kac's Lemma one has that the invariant measure of a regeneration 
is equal to the
reciprocal of the above display.
The regenerations are useful  to compute the invariant measure of a measurable set $A$, which will follow from conditioning on the last regeneration time of  $(X_n)_{n\in \Zset}$. 
To simplify the notation, for every $j\in \Zset$, we define the events
$$
R_{j}= \{\exists i\in\Zset  \ | \  \zeta_i=j \}
\quad \text{and } \quad 
W_{j} = R_j \cap \bigcap_{i=j+1}^{0} R_i^c,  \text{for}\  j\le 0,
$$
corresponding, respectively, to the occurrence of a regeneration at time $j$ and that no other regeneration occurs until time 0.
The invariant measure of any measurable set $U$ can be computed partitioning the past according to the $W_{-j}$'s
\begin{equation} \label{ma}
\mu(U) =    \sum_{i=0}^{\infty}  \mu(U| W_{-i} ) \mu (W_{-i} )  .
\end{equation} 
In particular, $U=\{X_0>a\}$ gives the  tail distribution and we put 
\begin{equation} \label{tail}
g_a :=\P(X_0>a)= \sum_{j=a+1}^{\infty} \mu(j) \ .
\end{equation}
These formulae will be used later on with the ad hoc properties of each specific model considered.
Finally, note that by construction, the process is reversible, and as a consequence we have conditional independence of 
consecutive blocks. That is
\[
\P_{R_1}(X_0\in A, X_1\in B)= \P_{R_1}(X_0\in A)  \P_{R_1}( X_1\in B) ,
\]
for $A, B \subseteq \Nset$.

We introduce now four particular examples corresponding to different cases which we are going to study in order to illustrate the finite and limiting behaviour of the extremal index.

\vskip0.3cm

{\bf Basic example: i.i.d.} 
As a first basic example, notice that a sequence of i.i.d. random variables is included in this family of processes with
$q_a(k)= \delta_1(k)$ for all $k$ and every $a$. 
We now come to the specific models we consider in this paper. \\

{\bf Smith's model.} 
The model considered by Smith \cite{S88} 
to show that the limiting extremal index and the limiting reciprocal mean of the cluster size may be different
is defined by setting
\[
q_a(k) = \left\{
\begin{array}{ll}
\frac{a-1}{a}  & {\rm \ for  \ }  k=1 \\
\frac{1}{a}  & {\rm \ for  \ }  k=a+1 \\
0  & {\rm \ otherwise  \ }   \\
\end{array} \right. .
\]
 Thus $\E(q_a)=2$, for all $a\in \Nset$, and hence $\nu=2$. 
 Moreover and in particular,  by (\ref{ma}) we get  $\mu(a)=p_a$. In fact, for  $a\in\mathbb{N}$ 
\begin{align}
\mu(a)&=  
\P(X_0=a,W_0)+\sum_{j=1}^a \P(X_0=a,W_{-j})=\frac12p_a+\sum_{j=1}^a\P(X_0=a,R_{-j}, \cap_{i=-j}^0(R_i)^c)\nonumber\\
&=\frac12p_a+\sum_{j=1}^a\P(R_{-j})\P_{R_{-j}}(X_0=a, \cap_{i=-j}^0(R_i)^c)=\frac12p_a+\sum_{j=1}^a\frac12\frac{p_a}{a}=p_a.
\label{eq:stationary-measure-Smith}
\end{align}

{\bf The block model.}
This model is constructed to present a case where the limiting extremal index does not exist, since the exit probability
fluctuates as  the level $u_n$ diverges. 
The same occurs for the limiting cluster size distribution.
However, the finite parameters are equal.
The model is constructed using any distribution $(p_a)_{a\in \Nset}$ but with deterministic distributions $(q_a)_{a\in \mathbb N}$.
Specifically 
\[
q_a(k) = \left\{
\begin{array}{ll}
{1} & {\rm \ for  \ }  k=a \\
0  & {\rm \ otherwise  \ }   \\
\end{array} \right. .
\]
Hence $\E(q_a)=a$ for all $a\in \Nset$, and we get $\nu=\sum_{a=1}^{\infty} a p_a$, which we assume to be finite in order to have stationarity.
In a similar way to Smith's model, it follows by (\ref{ma}) that $\mu(a)=ap_a/\nu$.
\\

\section{Decay of correlations}

A general argument shows  that a mixing  regenerative process is weak Bernoulli (see for instance the book of P.C. Shields \cite{S96}). Thus, the models presented in this paper are all weak Bernoulli.
In some specific cases, stronger decay of correlations can be computed explicitly.
As an illustration, we are going to compute here the probability of having a regeneration after $n$-steps, given another one was observed in the present time.
Namely,
\[
c_n = \P( R_{n+1} \ | \  R_0 ) ,
\]
in the case of a two symbols process.

{\bf  Morse Code and Fibonacci numbers.} Consider the following case, as a basic example of the block model.
Suppose $p_2=1-p_1$ (and $p_a=0$ for $a\ge 3$). That is, the process only takes values $1$ and $2$. 
Further, for $a=1$ and $a=2$ consider $q_a = \delta_a$. 
Namely,  when $1$ is chosen, it is written once, and when $2$ is chosen, it is written twice.
This model represents the messages that can be written with the Morse code where  only points and traces
are allowed. Thus $c_n$ can be regarded as the probability to write a message of exactly length $n$.
Put $x$ to be the total number of $1$'s and similarly $y$ the total number of $2$'s in this message.
We get
\begin{equation} \label{fibo}
c_n =    \sum_{y=0}^{\lfloor n/2 \rfloor} { x+y \choose y}  p_1^x p_2^y .
\end{equation}
The condition $x+2y=n$ allows to rewrite the above display as
\[
c_n= p_1^n \sum_{y=0}^{\lfloor n/2 \rfloor} { n-y \choose y}   (\frac{p_2}{p_1^2})^y .
\]
At the moment, notice that for the golden ratio $p_1= (-1+\sqrt{5})/2=\phi,$ one gets
\[
c_n=\phi^n F_n, 
\]
where $F_n$ is the $n$-thd
Fibonacci number.
Pascal recurrence leads to re-write the above formula as
\[
c_n = p_1 c_{n-1} +  p_2 c_{n-2} \ .
\]
 A recursive formula conditioning on the previous regeneration could be also invoked to obtain this recursion.
It is classical to obtain the solution of this recursion via roots of its characteristic polynomial
\[
x^2- p_1x-p_2 ,
\]
which, since the different roots are $r_1=1$ and $r_2= p_1-1$, takes the form
\[
c_n=  K_1  1^n + K_2 (p_1-1)^n .
\]
With the initial condition $c_0=1, c_1=p_1$, the constants become
\[
K_1=   \frac{1}{2-p_1} \quad ; \quad K_2=   \frac{1-p_1}{2-p_1}  .
\]
Thus, notice that $K_1=\P(R_0)$ and 
since $c_n$ converges to $K_1$, we get the (exponential) decay of correlations.
Now, it follows easily that the process is $\psi$-mixing with exponential rate function $\phi(n)=(1-p_1)^n$.
For easy reference, we recall the reader that $\psi$ is defined as
\[
\psi(n) = \sup_{A\in \C^u , B\in f^{-(n+u)}\C^v, u,v\in \Nset} \left|    \frac{\P(A \cap B)}{\P(A )  \P(B)} -1 \right| ,
\]
where $f$ is the shift operator.

\vskip0.3cm
{\bf  The Finite Block Model.}
The argument on the example above  can be easily carried on (except for  interpretation (\ref{fibo})) to prove that the process $(X_n)$ considered in this paper over a finite alphabet $\C$
are exponentially $\psi$-mixing.

It is worth noticing that the above methodology captures the eigenvalues of the Perron-Frobenious operator, 
identifies the largest one with modulus equal to 1, the remaining with smaller modulus and also the rate of mixing given by the spectral gap.  \\

{\bf  The Infinite case.} 
The infinite case must be considered with more attention and may be not  $\psi$-mixing.
Consider for instance a probability $(p_a)_{a\in \Nset}$ with no-null entries $p_a$.
We treat first the block model.
So, suppose further that for each positive integer $a$, one has the conditional probability $q_a=\delta_a$.
That is, each time $a$ is chosen in a regeneration, it is repeated deterministically $a$ times.
Thus
\[
\P(X_{a-1}=a |R_0, X_0=a) =1 .
\]
Since $a$ can be as large as we want, the process can not be $\psi$-mixing.

Now consider the Smith's model. Fix $n\in\Nset$ and for $a>n$ take $A_a=\{ X_{-1}\not=a, X_0=a\}$ and $B=\{X_{n+1}=a\}$.
Thus  $\P(A \cap B)/\P(A) \ge 1/a$ while  $\P(B)=p_{a}$, and the ratio of the last two probabilities can not be close to one.

\section{Extremal index \& company}\label{sec:EI-and-company}

\subsection{Definitions}

In this section we present  specific definitions for the family of parameters we are going to consider.
Being one of the main purposes of this paper, we present them for finite observations and then consider their 
asymptotics.
We begin with the  extremal index. 
We introduce first some notation to simplify the expressions.
For a size $q \in \mathbb{N}$ and a level  $a>0$,
let us define the sets $U_a=\left\{X_0 > a\right\}$ and $A_a^{(q)}=\left\{X_0 >a,X_1\leq a, ..., X_q \leq a\right\}$.

\begin{definition} The (finite, $a$-level) extremal index  (up to time $q$) is defined by
$$
 \theta_q(a)
= 
\mathbb{P}(A_a^{(q)}|U_a) . 
$$
\end{definition}
This is the probability of not observing another exceedance (of level $a$) up to time $q$ given that we begin with the observation of an exceedance at time 0.
This  formula was used firstly by O'Brien and then by other authors for the extremal index (see for instance \cite{AFFR17, FFT15, FFT12, O87}).
  The value of $q$ is determined by the observable $U_a$ and the decay of correlation properties of the process. See \cite[equation (15)]{AFFR16} and the discussion preceding it  regarding adequate choices of $q$.
  In general,  the larger is $q$, the more difficult it will be to compute it.
 In the context of hitting times, 
it was shown that, under fast mixing conditions (\cite{A06, ACG15})  $q$ can be taken 
as the (minimum) \emph{periodicity} of the observable 
(also called shortest possible return time, or  shortest possible distance between two observations of $U_a$).
It is given by the positive integer defined as follows
\[
p(U_a) = \inf\{k\ge 1 \ | \    \P(  X_0 >a  , X_k  >a  )>0  \} .
\]
We call  the \emph{escape probability} \cite{AGR18}  to $\theta_q$ 
when taking $q=p(U_a)$. Namely, the escape probability is  $\theta_{p(U_a)}=\P(A_a^{(p(U_a))} \ | \ U_a)$.

An extremal index smaller than one gives rise to a clustering phenomenon. The size of this cluster,
being random, has a distribution with expectation related to the reciprocal of the extremal index.
One must be careful in defining the size of this cluster.
Two different cases are considered here.
The first one is due to the heuristic argument described in the introduction.
It considers the process \emph{starting} from the observable state of interest and counts for how long does it stay in the same state.
The geometric behaviour of this quantity, called \emph{sojourn time}  (under suitable conditions) was proved by Abadi and Vergne \cite{AV09}.
In practice, this situation is usual in physical problems and computational simulation where an initial condition must be imposed.
It also corresponds to the case when some automatic mechanism detects the occurrence of $U_a$
but failures on the mechanism or in the sample itself do not allow to guarantee that the cluster actually
started at this point. To formalize, let
\[
N_a = \sup\{ k \ge 0  \ | \ X_{jp(U_a)}> a , \forall  0 \le j \le k\} +1 ,
\]
the number of consecutive observations of the excedance of $a$.
The $+1$ at the end corresponds to counting the occurrence of the exceedance at time zero, namely $X_0>a$.
(And we set $N_a=0$ if $X_0\le a$).

\begin{definition} The expected \emph{sojourn} is defined by
\[
\E_{U_a}(N_a  ) .
\]
\end{definition}

The second one is due to a natural interpretation of the process as a time series evolution and then considering the beginning of a cluster. That is, when the process \emph{enters} in the observable state of interest. Stationarity lets us fix this entrance at any position in the time scale.
\begin{definition} Let $E_a=\{ X_{-p(U_a)} \le a  ,  X_0 > a \}$ the \emph{entering} to the exceedance to $a$.
We define the mean cluster size  to the expectation of $N_a$
\[
\E_{E_a}(N_a) .
\]
\end{definition}

Note that in the first one we know that at time 0 we have an exceedance but do not know if a cluster of such exceedance could have been initiated earlier, while in the second one we know that the occurrence at time 0 was the beginning of a cluster.

\subsection{Computations}

In this section we proceed to compute the clustering parameters defined in the previous section, to illustrate the already mentioned
different behaviours.

\subsubsection{ The Smith's model}  We are going to consider first the case of an excedance to a level $a$ and then 
the case of hitting a cylinder of at least size $n$.
\pagebreak

\underline{\bf Exceedances} 

Consider an exceedance of a level $a$, and let us compute the clustering parameters associated to this event.
We begin with the  escape probability. Since in our examples two exceedances can occur immediately one after the other, we 
get $p(U_a)=1.$ Thus we compute
\[
\theta_1(a)= \frac{\P(X_0 > a , X_1 \le a ) }{  \P(X_0 > a  )  } . 
\]
Recall that by (\ref{mean}), one has $\nu=2$.
The  denominator is computed using (\ref{tail}) and (\ref{eq:stationary-measure-Smith}) so that $ \P(X_0>a)  =  \sum_{j>a} p_j$  which gives that in this model $  g_a =e_a$.
The numerator follows by noticing there is a regeneration at time 1 and then the future and the past becomes conditionally independent.
That is, first 
$
\{X_0 > a , X_1 \le a \} = \{X_0 > a , R_1, X_1 \le a \} .
$
It follows that
$$
\P(X_0 > a , X_1 \le a ) = \P(R_1)  \P(X_0 > a | R_1 )  \P(X_1\le  a | R_1 ) .
$$
Since the distribution of $X_0$, conditioned to a regeneration at the origin,  is the distribution of $Z_0$
we get
\[
 \P(X_0 > a | R_1 ) = e_a := \sum_{j>a}p_j  ,  \qquad \text{and } \qquad \P(X_1\le  a | R_1 ) = 1-e_a .
\]
We obtain  $\theta_1(a)=(1-e_a)/2$. The limiting extremal index equals 1/2 as stated in Smith's work \cite{S88}. \\

We compute now the mean  cluster size and then the mean sojourn time.
In the first case one can establish the following  equation according  to whether or not one chooses a block of size one
\begin{eqnarray*}
\E_{E_a}(N_a)
&=& \sum_{j>a} \left(1+\E_{R_1}(N^{(1)}_a )\right)\frac{j-1}{j} \frac{p_j}{g_a} + \sum_{j>a}\left(j+1+\E_{R_{j+1}}(N^{(j+1)}_a) \right) \frac{1}{j} \frac{p_j}{g_a}     \\
&=& 2 +\E_{R_0}(N_a) .
\end{eqnarray*}
Here $N^{(j)}_a)$  stands for the cluster size starting to count at $j$ instead of 0.
The second equality follows by stationarity.
 
Now, a  recursive relation can be established for $x=\E_{R_0}(N_a)$. 
Decomposing the future in choosing or not the symbol  $a$, and if so, 
in the length of the first block,  we can equate
\[
x=\sum_{j > a}(1+x )\frac{j-1}{j}  p_j + \sum_{j > a} (j+1+x)\frac{1}{j} p_j+ O  (1-\sum_{j > a}p_j)    . 
\]
Solving this equation one gets $x=2e_a/(1-e_a)$, and therefore   $\E_{E_a}(N_a)=2/(1-g_a)$  which is the reciprocal of the \emph{finite} escape probability.
This holds even when the expectation of the limiting distribution of the cluster size equals one.

For the mean sojourn time we will use (\ref{meansojourn}). 
To that,  we need first the second moment $\E_E(N^2_a)$. 
We write $N_a$ as the length of the first block plus the length of the cluster after the next regeneration
\[
N_a = F+G
\]
where 
\[
F= \inf\{ \xi_j \ | \  \xi_j\ge 1 \}  ,
\]
and
\[
G= \inf\{ \xi_j  \ | \  \xi_j  > F \}  -F .
\]
With this,  since $F$ and $G$ are independent and by stationarity
\begin{equation} \label{secondm}
\E_E(N^2_a) = \E_E(F^2_a) + 2\E_E(F) \E_{R_0}(N_a) +\E_{R_0}(N_a^2) .  
\end{equation}
Direct computations give
\[
E_E(F)=2  , \qquad E_E(F^2_a) =  \frac{1}{e_a}\sum_{j>a} (j+3) p_j = 4,
\]
and
\[
E_{R_0}(N_a)   = \frac{2e_a}{1-e_a}    \qquad \text{and}  \qquad E_{R_0}(N_a^2)=2  \frac{e_a(e_a+1)}{(1-e_a)^2}  \ .
\]
We conclude then that
\[
\E_{U_a}(N_a) = \frac{\theta_1(a)}{2} ( 6+O(e_a))  , 
\]
which converges to 3/2 as $a$ grows while the expected cluster size converges to 2.
\vskip0.3cm

\noindent
\underline{\bf   Hitting to cylinders} \  \vskip0.3cm

Consider  the infinite sequence ${\bf a}=(a,a,a,...)$  consisting only by the symbol $a$.  
For large $n$,  visits  to the first $n$ symbols of {\bf a} are exceedances of the level corresponding to coincidences of the process 
with {\bf a}. 
We are going to compute the extremal index $\theta$  for exceedances of level the corresponding to $n$ 
coincidences and then we consider  the asymptotics on $n$.
The period of $\{X_0^{n-1}=a\} $ is 1.   Thus,  we are going firstly  to obtain the escape probability $\theta_1(n)$.
Namely   
$$\theta_1(n)=\frac{\mathbb{P}(  X_0^{n-1}=a , X_{n} \neq a )}{\mathbb{P}(  X_0^{n-1}=a)} . 
$$
As in the previous case, we consider the numerator and the denominator separately.
By reversibility  and conditioning on the regeneration, the numerator is equal to
\[
\frac{1}{2} \P(X_1\not=a |R_0) \P(X_0^{n-1}=a |R_0) .
\]
The first factor to compute is just $1-p_a$. 
For the second one, put  $p=p_a/a$ and $q=p_a (a-1)/a$.
Since one put immediately after the regeneration, either a block of length 1 or $a+1$, one can construct the recursive equation
\begin{equation} \label{an}
\P( X_0^{n-1}=a|R_0)= q \P(X_0^{n-2}=a|R_0) + p \P(X_0^{n-(a+2)}=a|R_0) ,
\end{equation}
which has characteristic polynomial
\[
x^{a+1} -q x^{a} - p = x^{a} (x-q) - p .
\]
From the last expression it follows that, for $a$ odd, it has two roots $r_1, r_2$ positive and negative  respectively with
$0 \le -r_2 < r_1  < 1$ and thus the solution of the recursion takes the form
\begin{equation} \label{odd}
\P( X_0^{n-1}=a|R_0)= K_1 r_1^n + K_2(n) r^n_2 ,
\end{equation} 
with $K_1$ a constant (on $n$) and $K_2(n)$ a polynomial of degree $a-1$.
For $a$ even, it has only one  root $0 < r_1  < 1$ and so the solution of the recursion takes the form
\begin{equation} \label{even}
\P( X_0^{n-1}=a|R_0)= K_1 r_1^n .
\end{equation} 
In either case,  the leading term is the first one. 
Now we compute the denominator, the stationary measure of $a^n$. We decompose it with respect to the previous occurrence of a regeneration. Namely, it is equal to
\[
\frac{1}{2}\sum_{j=0}^{a} \P( W_j , X_{-j}^{n-1}=a|W_{j}) .
\]
The first term has just been computed. For the remaining term, since there is no regeneration at time 0,  
the first block (the one immediately  after the regeneration at $-j$) has to have  size $a+1$.
Therefore, for $1 \le j\le a$
\[
\P( W_j , X_{-j}^{n-1}=a|W_{j}) = p   \P(  X_{-j+a+1}^{n-1}=a|R_{-j+a+1})
= p   \P(  X_{0}^{n-2+j-a}=a|R_{0})  .
\]
The second equality follows by stationarity. The last expression is the one already obtained.
We conclude that
\begin{eqnarray}
\mu(a^n) 
&=&   \frac{1}{2} \left[   \P( X_{0}^{n-1-j}=a|R_{0})+p \sum_{j=1}^{a} \P( X_{0}^{n-1-j}=a|R_{0}) \right] \\
&=&  \frac{1}{2} K_1 r_1^n  \left[ 1+ \frac{p}{r^a} \sum_{j=0}^{a-1}  r_1^{j} \right]  + o(r_1^n)    .
\end{eqnarray}
A direct calculation using the fact that $r$ is the root of the characteristic polynomial gives
that the factor between brackets is equal to
\[
\frac{1-p_a}{r^a(1-r)}.
\]
Finally, we get
\[
\theta_n^{-1}  \approx \frac{1}{1-r}. 
\]

Now we compute the expectation of the cluster size.
It can be easily derived from the conditional  measure of $a^n$ derived in (\ref{odd}) and (\ref{even})
\[
\E_E(N_a) = \sum_{j=0}^{\infty} \frac{\P(X_0^{n-1+j}=a|R_0) }{\P(X_0^{n-1}=a|R_0  )} =  \sum_{j=0}^{\infty} r^j = \frac{1}{1-r} .
\]
Observe that the sojourn distribution is geometric and
this example shows how the cluster and sojourn size coincide in this  case.

\subsubsection{The block model}  
As before, we consider first the case of exceedences of the level $a$ and then the case of hitting a sequence of at least size $n$. \\

\noindent
\underline{\bf Exceedances} 

\vskip0,3cm 
Consider an exceedance of a level $a$, 
We still have in this case $p(U_a)=1$.
As in the Smith's model,  we compute
$\theta_1(a)= \P(X_0 > a , X_1 \le a )/  \P(X_0 > a  )$. 
Similarly to that case, 
\[
\mu(j)= \frac{jp_j}{\nu} \ ,     \quad \P(X_0>a)=g_a= \frac{\sum_{j>a}jp_j}{\nu} \quad \text{and} \quad \nu=\sum_{j\ge 1}jp_j\ .
\]
For the numerator 
$
\P(X_0 > a , X_1 \le a ) = \mu(R_1) \P_{R_1}(X_0 > a , X_1 \le a ) .
$
It follows that
$$
\P(X_0 > a , X_1 \le a ) 
= \frac{1}{\nu} e_a (1-e_a) ,
$$
where we recall that $e_a=\sum_{j>a}p_j$.
Thus  $\theta_1(a)=e_a(1-e_a)/g_a$, and the limiting extremal index is equal to zero. \\

Now, we compute the mean of the cluster size of exceedances of $a$.
Entering $U_a$  means that a regeneration has just occurred.
Recall that by construction of the process, one puts blocks of length $a$ of level $a$.
The first one is mandatory by the initial condition.
Thus $\E_E(N_a)= \frac{g_a}{e_a}+\E_{R_0}(N_a)$.
The recursion for the last expectation is
\[
x=\sum_{j>a}(j+x)p_j + O(1-e_a),
\]
which gives $x=g_a/(1-e_a)$.
Then, it follows a geometric number of blocks with random size, but larger than $a$.
$$
\E_E(N_a) =  \frac{g_a}{e_a} + \frac{g_a}{e_a} \frac{e_a}{(1-e_a)} = \frac{g_a}{e_a(1-e_a)} , 
$$ 
which is the reciprocal of the escape probability.
Even though, the distribution of $N_a$, as $a$ diverges,  does not even converge to a limiting distribution.
Actually, the cumulative distribution   $\P_E(N_a \le k)$ converges pointwise to zero for all $k$, which means the reciprocal of the mean size
does  not even exist and one can not  compare with the extremal index in the limit. 
Despite of this, they are equal for the finite case.
The reason is clear, there is a mass escape in the distribution of the $N_a$'s, they are not uniformly bounded by an integrable function and the  Dominated Convergence Theorem does not hold.
The lack of tightness is at the core of the construction. 
Taking the limit of the  expectation and not the expectation of the limit should be the recipe  for relating it to the extremal index. 

To compute the mean sojourn time, as in the Smith's model we use formula (\ref{meansojourn}), in the appendix.
Thus we first compute 
\[
  \E_E(F) =  \frac{g_a}{e_a}, \qquad
  \E_E(F^2_a) =  \frac{1}{e_a}\sum_{j>a}  j^2 p_j  . \qquad 
\]  
And further 
\[
 \E_{R_0}(N_a) =  \frac{g_a}{e_a} \frac{ e_a}{1-e_a}, \text{} \qquad
 \E_{R_0}(N_a^2) =  \frac{g_a}{e_a} \frac{ e_a(e_a+1)}{(1-e_a)^2} .
\]
Using again Lemma (\ref{lemma_meansojourn}), we obtain that
\[
\E_{U_a}(N_a )  =  O\left(\frac{\sum_{j>a}  j^2 p_j }{g_a}\right)
\]
which differs from the expected cluster size.\\

\noindent
\underline{\bf   Hitting to cylinders} \  \vskip0.3cm

Take yet  the infinite sequence ${\bf a}=(a,a,a,...)$.  Still in this model $p(U_n)=1$.
Let us compute the escape probability  
\[
\theta_1(n)= \P(X_n\not= a | X_{0}^{n-1}=a) =1- \frac{\mu(a^{n+1})}{\mu(a^{n})} .
\]
It is suffice therefore to compute $\mu(a^n)$.
To do that we condition  in the last occurrence of a  regeneration  of the process before $X_0=a$.
Since the process repeats $a$ times the symbol $a$, this regeneration cannot go further than $a$ coordinates before $0$. Thus
\[
\mu(a^n)=\P(X_{0}^{n-1}=a) = \underset{j=0}{\overset{a-1}{\sum}} \P( X_{-j}^{n-1} =a  |  \rj )  \P(\rj) .
\]
Now,  for $0 \le j \le a-1$, write 
\begin{equation} \label{euclid}
n+j=  \left\lceil \frac{n+j}{a} \right\rceil a- s_{n+j} , \quad   0 \le  s_{n+j}  \le a-1 . 
\end{equation}
By construction of the process
\begin{equation} \label{invan}
\mathbb{P}(X_{-j}^{n-1}=a | R_{-j} ) = 
p_a^{\left\lceil \frac{n+j}{a}\right\rceil} .
\end{equation}
We conclude that 
\[
\mu(a^n)=  \frac{1}{\nu}   \underset{j=0}{\overset{a-1}{\sum}}  p_a^{ \left\lceil \frac{n+j}{a}\right\rceil   } 
= \frac{ p_a^{\left\lceil \frac{n}{a} \right\rceil } }{\nu}  
 \left[ s_n+1 +   (r_n-1)  p_a  \right].
\]
Therefore $\theta_1(n)$ is equal to
\[
1-\frac
 {  s_n +   r_n   p_a   } 
 {  s_n+1 +   (r_n -1)  p_a   } 
 = \frac
 {  1 -   p_a  } 
{  s_n+1 +   (r_n -1)  p_a   }  .
\]
Thus, the extremal index, as a limit, does not exist since $s_n$ runs cyclically between $0$ and $a-1$.

We now  estimate the  mean of the distribution of consecutive observations of 
the target sequence  $a^{n}$.
Set $E=\{ X_{-1} \not =a  ,  X_0^{n-1}=a \}$.
We use again the unconventional Euclidean form (\ref{euclid}) and get
$$
\mu_E(N_n \ge k) = \left\{
    \begin{array}{ll}
        1 & \text{if} \   1 \le  k\leq s_n+1,  \\
        p_a^\ell & \text{if} \ \ell a +s_n+1<k\leq (\ell+1)a + s_n+1, \ \ell\ge 1 .
    \end{array}
\right.
$$
We  conclude that the distribution of $N_n$ does not converge to a limit distribution in $n$.
Further,  since a new block of size $a$ is chosen with probability $p_a$, we can establish the following equation
\[
\mathbb{E}_E(N_n)=  (\mathbb{E}_E(N_n)+a)p_a + (s_n+1)(1-p_a) .
\]
It follows that
\[
\mathbb{E}_E(N_n)=s_n+1  + \frac{a p_a}{1-p_a} .
\]
Now, $s_n$ does not  have limit as $n$ diverges.
Thus $\mathbb{E}_E(N_n)$ does not have a limit in $n$.
However it is easy to verify the identity
\[
\frac{1}{\mathbb{E}_E(N_n)} = \theta_1(n) \ .
\]


Let us consider the mean sojourn time. 
In this case
\[
  \E_E(F) =  s_n+1, \qquad
  \E_E(F^2_a) = (s_n+1)^2 , 
 \] 
 and
 \[
 \E_{R_0}(N_a)     =  a \frac{ p_a}{1-p_a}, \text{} \qquad
 \E_{R_0}(N_a^2) = a  \frac{ p_a(p_a+1)}{(1-p_a)^2} .
\]
Using   (\ref{meansojourn}), one can derive an expression for the mean sojourn time. Even though $a$ is fixed and one consider 
asymptotics in $n$, it is interesting in  particular to consider the case of large $a$ for which $ap_a$ is small.
In that case
\[
\E_{E}(N_a )  \approx s_n+1  \qquad \text{and} \qquad      \E_{U_a}(N_a )  \approx   \frac{s_n}{2}+1  .
\]

\section{Extremal vs. Escape}

O'Brien's formula defines  the extremal index as a function of a suitable  number  $q =o( \mu(A)^{-1})$ 
and then putting  $\theta=\theta_q= \P(A^{(q)}|U)$.
This formula was also obtained independently for the exponential law for hitting/return times in 
\cite{A06, ACG15}.
The precise value of $q$ to be taken depend on the properties of decay of correlations of the process and on the observable itself.
In general, the larger is $q$, the more difficult is to compute $\theta_q$.
The lemma below establishes that in the general model  we considered, and for any observable level or cylinder set $U_a$, any $q=o( \P(U_a|R_0)^{-1})$ can be taken, and thus one can 
chose the minimum possible, which is the period of the observable.

\begin{lemma} \label{equiv}
Consider the regenerative process defined in section \ref{model}. Consider the level $a\in \Nset.$ 
The following inequality holds  for all $q\in\Nset$
\[
\left| 1- \frac{\theta_q}{\theta_1}\right| \le      q \P(U_a|R_0)  . 
\]
\end{lemma}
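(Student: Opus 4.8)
The plan is to work with the unnormalised probabilities $\P(A_a^{(q)})$ rather than the conditional ones, and to exploit the structural feature that whenever the process switches between a value $\le a$ and a value $>a$ there must be a regeneration; conditioning on that regeneration decouples the newly drawn symbol — which exceeds $a$ with probability exactly $\P(U_a\mid R_0)$ — from everything that came before. Concretely, I would first record the inclusions $A_a^{(q)}\subseteq A_a^{(1)}\subseteq U_a$, valid for all $q\ge 1$, so that $\theta_q\le\theta_1\le 1$ and the absolute value drops out, giving
\[
\left|1-\frac{\theta_q}{\theta_1}\right|=\frac{\theta_1-\theta_q}{\theta_1}=\frac{\P(A_a^{(1)})-\P(A_a^{(q)})}{\P(A_a^{(1)})},
\]
where the $\P(U_a)$ cancels. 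Next I would describe the difference set $A_a^{(1)}\setminus A_a^{(q)}=\{X_0>a,\ X_1\le a,\ \exists\,2\le j\le q:\ X_j>a\}$ and partition it according to the first index $m\in\{2,\dots,q\}$ at which an exceedance of $a$ reappears, obtaining
\[
\P(A_a^{(1)})-\P(A_a^{(q)})=\sum_{m=2}^{q}\P\big(X_0>a,\ X_1\le a,\ \dots,\ X_{m-1}\le a,\ X_m>a\big).
\]

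The main step is to bound each summand. On the event in the $m$-th term the process moves from a value $\le a$ at time $m-1$ to a value $>a$ at time $m$; since blocks are constant runs, the sites $m-1$ and $m$ lie in different blocks, so $R_m$ holds. Conditioning on $R_m$ and using that the symbol carried by the block initiated at time $m$ has law $(p_a)$ and is independent of $(X_s)_{s<m}$ (immediate from the construction, since the $(Z_i)$ are i.i.d.\ and independent of the lengths and of the random shift, which alone determine $R_m$), one gets
\[
\P\big(X_0>a,\dots,X_{m-1}\le a,\ X_m>a\big)=\P(U_a\mid R_0)\;\P\big(X_0>a,\ X_1\le a,\dots,X_{m-1}\le a,\ R_m\big)\le \P(U_a\mid R_0)\,\P(A_a^{(1)}),
\]
using $\P(X_0>a\mid R_0)=\P(U_a\mid R_0)$ and then discarding $R_m$ together with the constraints $X_2\le a,\dots,X_{m-1}\le a$. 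Summing over the $q-1$ admissible values of $m$ and dividing by $\P(A_a^{(1)})$ yields $(\theta_1-\theta_q)/\theta_1\le (q-1)\,\P(U_a\mid R_0)\le q\,\P(U_a\mid R_0)$, which is the claim (the case $q=1$ is trivial, both sides of the displayed difference vanishing). Throughout one tacitly assumes $\P(U_a)>0$ and $\theta_1>0$ so that the conditional probabilities are defined, which in the models considered holds whenever $0<\P(U_a\mid R_0)<1$.

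The only delicate point, and the place where a little care is needed, is the conditional-decoupling step: one must be sure that conditioning on $R_m$ neither biases the law of the freshly drawn symbol away from $(p_a)$ nor manufactures dependence with the past. This is exactly where the explicit description in Section~\ref{model} is used — symbols drawn i.i.d., independently of the regeneration skeleton — and it is the content of the conditional independence of consecutive blocks recorded at the end of that section, upgraded to the standard renewal statement that the pre-$m$ and post-$m$ processes are independent given a regeneration at $m$. The argument is insensitive to whether two exceedances can be consecutive, and it transfers verbatim to a cylinder observable $U_a=\{X_0^{n-1}=a\}$, with $A_a^{(q)}$ defined through the period $p(U_a)$ and $\P(U_a\mid R_0)$ the corresponding conditional cylinder probability.
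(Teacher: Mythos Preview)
Your proof is correct and follows essentially the same route as the paper's: both compute the difference $\P(A_a^{(1)})-\P(A_a^{(q)})$, partition according to the first index $m\ge 2$ at which an exceedance reappears, observe that a regeneration must occur at $m$, and use the conditional independence across $R_m$ to split off the factor $\P(X_0>a\mid R_0)$ while bounding the remaining past event by $\P(A_a^{(1)})$ (equivalently, bounding the conditional probability by one). Your write-up is somewhat more explicit about why $R_m$ holds and why the decoupling is legitimate, but the argument is the same.
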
 
\begin{remark} The monotonicity of $\theta_q$ as a function of $q$ and the
 lemma above establish that the $\theta_q$'s are  equivalent in ratio
 for all $q= o(\P(U_a|R_0)^{-1})$.
In particular, this shows that the parameter $\lambda_{U_a}$ in the exponential law of the hitting/return time of $U_{a}$ 
can be replaced by $\theta_1$.
\end{remark}

\begin{proof}[Proof of Lemma \ref{equiv}]
Since $p(U_a)=1$ consider
\[
\theta_1 =   \frac{\P( X_0 > a ,   X_{1}  \le  a )}{ \P( X_0 > a ) }
\quad {\textstyle \ and \ } \quad
\theta_q =   \frac{\P( X_0 > a ,  \cap_{j=1}^{q} X_{j}  \le  a )}{ \P( X_0 > a ) } .
\]
The difference of the  probabilities in the numerators is equal to
\[
 \P(X_0 > a  , X_{1}  \le  a ,   \cup_{j=2}^{q}  X_{j}  > a  )  .
\]
Making a disjoint partition of the union in the above probability as a function of the second excedance
we get that it  is equal to
\[
\P(X_0 > a  , X_{1}  \le  a ) \    
\sum_{j=2}^{q} \P\left(  \bigcap_{i=2}^{j-1} X_{i}  \le  a ,  X_{j}  > a |   X_0 > a  , X_{1} \le a\right)   .
\]
Since there must be a regeneration at time $j$, the leading term can be factorized as
\begin{equation} \label{minor}
\P(  \bigcap_{i=2}^{j-1} X_{i}  \le  a ,   R_j   |   X_0 > a  , X_{1} \le a) 
\P(   X_{j}  > a |   R_j) \ .
\end{equation}
The left-most factor is bounded simply by one. The second one is equal to $\P(   X_{0}  > a |   R_0$, independently of $j$,
by stationarity.
This concludes the proof of the lemma.
\end{proof}

\begin{example}
Consider first the case of exceedances of the level $a$ by the process.
Then $\P( X_{0}  > a |  R_0)$ is equal to $e_a.$
For the case of cylinders $a^n$ one has 
$\P( X_{0}  > a | R_0)= O(K^n)$ for a constant $0<K<1$.
For a general cylinder $a_0^{n-1}$ the same proof holds when changing $\theta_1$ by $\theta_{p(a_0^{n-1})}$.
\end{example}

{\bf Sharpness.} 
Instead of bounding the left-most factor in (\ref{minor}) by one, we can compute it exactly, at least in some cases.
Suppose the $X_i$'s are independent random variables. 
In this case, it becomes equal to $(1-p_a)^{j-2}$. summing up to $q$ we obtain
$1-(1-p_a)^q /p_a$. The denominator cancels with $p_a$ coming from $\mu(   X_{j}  > a |   R_j) $.
For large $a$, one has $1-(1-p_a)^q \approx 1-\exp^{- p_a q}$ which for moderate $q$ is approximated by  $q p_a$.
Thus, we obtain the order of magnitude of the upper bound given by the lemma.
This means that the lemma cannot be improved. Only a better constant may be obtained depending on the ad-hoc properties of the process.

Further, the upper bound for the approximation of $\theta_q$ by $\theta_1$ is almost trivial to compute.
For the exceedances, in both models, $\P(X_0 > a | R_0)=e_a$.
In the case of hitting to  $\{X_0^{n-1}=a\}$, one gets  $\P(X_0^{n-1}=a | R_0)=p_a^{ \lceil n/a \rceil}$ for the block model.
For the Smith model, its exponential decay on $n$  was already computed in (\ref{odd}) and (\ref{even})
.

\section{appendix}

The following lemma establishes a general  tool for computing the expected sojourn time. 

\begin{lemma} 
\label{lemma_meansojourn}
The mean sojourn time verifies
\begin{equation} \label{meansojourn}
 \E_{U_a }(N_a)  
=
\frac{\theta_1(a)}{2} \left(  \E_{E }(N^2_a) +  \E_{E}(N_a)    \right) .
\end{equation}
\end{lemma}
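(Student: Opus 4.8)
The plan is to disintegrate the conditional measure $\P_{U_a}$ according to the maximal run of exceedances of level $a$ surrounding time $0$, and to match that decomposition against the ``entering'' picture described by $E_a$. Since in all the situations considered one has $p(U_a)=1$, a run of exceedances is just a maximal block of consecutive indices $i$ with $X_i>a$; write $L_a$ for the length of the run containing time $0$, which is well defined and a.s.\ finite on $U_a$. On $U_a$, the variable $N_a$ counts how many of those exceedances occur at times $0,1,\dots$, so if time $0$ occupies the $m$-th position of a run of length $\ell$ (counting from the left), then $N_a=\ell-m+1$.

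The key step is an identity coming purely from stationarity. For $1\le m\le\ell$, the event $\{U_a,\ L_a=\ell,\ \text{time }0\text{ at position }m\}$ is exactly $\{X_{-m}\le a\}\cap\bigcap_{i=1-m}^{\ell-m}\{X_i>a\}\cap\{X_{\ell-m+1}\le a\}$; applying the shift that sends index $1-m$ to $0$ turns this into $\{X_{-1}\le a\}\cap\bigcap_{i=0}^{\ell-1}\{X_i>a\}\cap\{X_\ell\le a\}=\{E_a,\ N_a=\ell\}$. Hence each of these $\ell$ events has probability $\P(E_a)\P_{E_a}(N_a=\ell)$; summing over the admissible positions gives $\P(U_a,L_a=\ell)=\ell\,\P(E_a)\,\P_{E_a}(N_a=\ell)$, and conditionally on $\{U_a,L_a=\ell\}$ the position $m$ — equivalently $N_a=\ell-m+1$ — is uniform on $\{1,\dots,\ell\}$, so $\E_{U_a}(N_a\mid L_a=\ell)=(\ell+1)/2$.

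Summing $\P_{U_a}(L_a=\ell)$ over $\ell$ (it is a probability distribution on $\{1,2,\dots\}$) yields the normalization $\P(U_a)=\P(E_a)\sum_{\ell\ge1}\ell\,\P_{E_a}(N_a=\ell)=\P(E_a)\,\E_E(N_a)$. Combining,
\[
\E_{U_a}(N_a)=\sum_{\ell\ge1}\P_{U_a}(L_a=\ell)\,\frac{\ell+1}{2}=\frac{1}{2\,\E_E(N_a)}\sum_{\ell\ge1}\ell(\ell+1)\P_{E_a}(N_a=\ell)=\frac{\E_E(N_a^2)+\E_E(N_a)}{2\,\E_E(N_a)}.
\]
Finally I identify $1/\E_E(N_a)$ with $\theta_1(a)$: by stationarity $\P(X_0>a,X_1\le a)=\P(X_0\le a,X_1>a)=\P(X_{-1}\le a,X_0>a)=\P(E_a)$, so $\theta_1(a)=\P(E_a)/\P(U_a)=1/\E_E(N_a)$ by the normalization above. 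Substituting gives the stated formula; if $\E_E(N_a^2)=\infty$ the identity holds with both sides infinite.

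The main obstacle is the bookkeeping in the key step: one must handle carefully the convention defining $N_a$ (the trailing ``$+1$'', the strict/non-strict inequalities at the two ends of a run, the convention $N_a=0$ off $U_a$), verify that the indicated shift really carries a ``time $0$ at position $m$'' configuration onto the ``entering at time $0$'' configuration, and note that the argument rests on $p(U_a)=1$, which is what licenses writing $\theta_1(a)$ rather than $\theta_{p(U_a)}(a)$. The cylinder case $U_a=\{X_0^{n-1}=a\}$ is handled identically, reading ``$X_i>a$'' as ``the block $a^n$ starts at $i$'' and $E_a$ as $\{X_{-1}\ne a,\ X_0^{n-1}=a\}$. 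Everything past the key step is rearrangement of sums.
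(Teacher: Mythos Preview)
Your argument is correct. The ingredients are the same as in the paper --- stationarity is the only tool --- but the organisation is genuinely different and worth noting.

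The paper writes $\E_{U_a}(N_a)=\P(U_a)^{-1}\sum_{k\ge1}k\,\P(\cap_{j=0}^{k-1}\{X_j>a\},X_k\le a)$, flips each summand via the identity $\P(\cap_{j=0}^{k-1}\{X_j>a\},X_k\le a)=\P(X_0\le a,\cap_{j=1}^{k}\{X_j>a\})$, and then recognises $\sum_{k\ge1}k\,\P_E(Y\ge k)=\tfrac12\bigl(\E_E(Y^2)+\E_E(Y)\bigr)$ as a tail--sum identity for the second moment. Your route instead decomposes $U_a$ by the full two--sided run length $L_a$ and the position $m$ of time $0$ in that run; the shift matches each cell to $\{E_a,N_a=\ell\}$, yielding the size--biased law $\P_{U_a}(L_a=\ell)\propto\ell\,\P_{E_a}(N_a=\ell)$ and the uniform distribution of the position given $L_a=\ell$. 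What your approach buys is a transparent probabilistic picture (the inspection/waiting--time paradox) and, for free, the Kac--type identity $\theta_1(a)=\P(E_a)/\P(U_a)=1/\E_{E_a}(N_a)$ as the normalisation of that size--biased law; the paper obtains $\P(E)/\P(U_a)=\theta_1(a)$ only at the very end from the $k=1$ case of its reversal identity. Both arguments are short and rely on nothing beyond stationarity and $p(U_a)=1$; the paper's is slightly more direct algebraically, yours is more structural.
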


\begin{proof}
Put 
\begin{equation} \label{def}
 \E_{U_a }(N_a)  =   \frac{1}{\P(U_a)}  \sum_{k=1}^{\infty} k \P( \cap_{j=0}^{k-1} X_j > a , X_k \le a) .
 \end{equation} 
 A classical equality for a stationary measure establishes that for all $k$
\begin{equation} \label{rever}
\P( \cap_{j=0}^{k-1} X_j > a , X_k \le a) = \P( X_0 \le a , \cap_{j=1}^{k} X_j > a ) .  
\end{equation} 
Let $Y=\max\{k \ge 1 \ | \  \cap_{j=1}^{k} X_j > a \}$. Then the last sum can be stated as
\[
 \sum_{k=1}^{\infty} k \P( X_0 \le a , Y\ge k ) .
\]
Now notice that, since $Y\ge 1$ and calling $E=\{X_0\le a, X_1>a\}$, one has 
\[
\E(Y^2 | E ) =  \sum_{k=1}^{\infty} (2k-1) \P(  Y \ge k |E ) .
\]
Thus it follows that the right hand side of (\ref{def}) is
\[
\frac{ P(E) }{  \P(U_a)}    \frac{ \E(Y^2 | E ) + \E(Y | E )  }{2} .
\]
By (\ref{rever}) with $k=1$ one gets $E=\{X_0 >a, X_1\le a \}$ and this ends the proof.

\end{proof}

\bibliographystyle{abbrv}
\bibliography{Clustering}

\end{document}